\theoremstyle{plain}
\newtheorem{thm}{Theorem}[section]
\newtheorem{lmm}[thm]{Lemma}
\newtheorem{cor}[thm]{Corollary}
\newtheorem{dfn}[thm]{Definition}
\newtheorem{qws}[thm]{Question}
\newtheorem{prb}[thm]{Problem}
\theoremstyle{remark}
\def\pmc#1{\setbox0=\hbox{#1}
    \kern-.1em\copy0\kern-\wd0
    \kern.1em\copy0\kern-\wd0}
\begin{document}

\bigskip

\title[On nerves of fine coverings of acyclic spaces]{On nerves of fine coverings of acyclic spaces} 

\bigskip

\bigskip

\author[U.~H.~Karimov]{Umed H. Karimov}
\address{Institute of Mathematics, 
Academy of Sciences of Tajikistan, 
Ul. Ainy $299^A$, 
734063 Dushanbe, 
Tajikistan}
\email{umedkarimov@gmail.com}

\author[D.~D.~Repov\v{s}]{Du\v san D. Repov\v s } \address{Faculty of Education 
and 
Faculty of Mathematics and Physics,
University of Ljubljana, 
Kardeljeva pl. 16, 
1000 Ljubljana,
Slovenia}
\email{dusan.repovs@guest.arnes.si}

\subjclass[2010]{
Primary 57N35, 57N60; 
Secondary 54C25, 54C56,
57N75}

\keywords{
Planar acyclic space, 
cellular compactum, 
ANR,
nerve,
fine covering, 
embedding into Euclidean space, 
\v Cech homology}

\begin{abstract}
The main results of this paper are:
(1) If a space $X$
can be embedded as a
cellular subspace
of $\mathbb{R}^n$ then $X$
admits arbitrary fine open
coverings whose nerves 
are homeomorphic to the $n-$dimensional
cube $\mathbb{D}^n;$
(2) Every $n-$dimensional cell-like compactum can be
embedded  into $(2n+1)-$dimensional Euclidean space
as a cellular subset; and
(3)  There exists a locally compact planar 
set which is acyclic with respect to {\v C}ech
homology and whose fine coverings are all  nonacyclic.
\end{abstract}

\maketitle

\section{Introduction}

In 1954 Borsuk \cite{Bor}
asked  whether every
compact $ANR$ is homotopy
equivalent to some
compact polyhedron?
His question was
answered in the affirmative in 1977 by West \cite{W}.
Much earlier, in 1928 Aleksandrov \cite{A1928} proved that  
every compact $n-$dimensional
space $X$ admits for any $\varepsilon >0$,
an $\varepsilon-$map onto some
$n-$dimensional finite polyhedron $P$
which is the nerve of some fine
covering of $X$, whereas $X$
does not admit any $\mu-$map
for
some
$\mu >0$ 
onto a polyhedron of dimension less than $n$.
These results
motivated
the  
classical Aleksandrov-Borsuk problem which remains open:

\begin{prb}
Given an  $n-$dimensional compact
absolute neighborhood retract
$X$
and 
$\varepsilon >0$, does there exist
an $\varepsilon-$covering $\mathcal{U}$ of order
$n+1$ such that the natural map of  $X$ onto the nerve
$\mathcal{N}(\mathcal{U})$ of the covering $\mathcal{U}$
induces a homotopy equivalence?
\end{prb}

A special case  is the following, also open problem:

\begin{prb}\label{AR}
Does every
$n-$dimensional
compact absolute retract 
admit a
fine covering
$\mathcal{U}$ of order $n+1$ such that its nerve
$\mathcal{N}(\mathcal{U})$
is contractible?
\end{prb}

Note that for the class
of cell-like cohomology locally conected compacta
the answer 
to
analogous
question is negative, since there exists a
$2-$dimensional cell-like cohomology locally connected compactum
whose fine coverings of order 3 are all nonacyclic \cite{KaUSSR,
KaTajik}.

In the present paper we shall investigate fine coverings of acyclic,
cellular and cell-like spaces. A topological space $X$ is 
called {\it acyclic with respect to {\v
C}ech homology} or simply {\it acyclic} if {\v C}ech homology with
integer coefficients of $X$ is the same as of the point. A {\it
cellular} subspace $X$ of the $n-$dimensional Euclidean space
$\mathbb{R}^n$ is a subspace of $\mathbb{R}^n$ which is the
intersections of a nested system of  $n-$dimensional topological
cubes $D^n$ :
$$X = \bigcap_{i=1}^{\infty}D^n_i, \ \mbox{where} \ D^n_{i+1} \subset {\rm
int} D^n_i.$$ Recall that it follows
by the continuity property of
{\v C}ech homology  that
every cellular space is acyclic.

Our first main result is the following:
\begin{thm}\label{Thm:Main}
If a space $X$ can be embedded as cellular subspace into the
$n-$dimensional Euclidean space $\mathbb{R}^n$ then $X$
admits
arbitrarily fine open coverings whose nerves are all
homeomorphic to the $n-$dimensional cube
$D^n.$
\end{thm}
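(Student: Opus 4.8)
The plan is to realize each fine covering of $X$ as the trace on $X$ of the open vertex--star covering of a sufficiently fine triangulation of a very tight cube from the defining nested sequence, and then to identify the nerve of that trace covering with an $n$-cell.

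\emph{Step 1: normal form.} After an ambient homeomorphism I may assume $X\subseteq I^n=[0,1]^n$ and $X=\bigcap_{k\ge 0}C_k$, where $C_0=I^n$, each $C_k$ is a topological $n$-cube, and $C_{k+1}\subseteq\operatorname{int}C_k$. Being a nested intersection of compact cubes, $X$ is a (possibly one--point) continuum, and for every $\delta>0$ some $C_k$ lies in the $\delta$-neighbourhood $N_\delta(X)$ of $X$ in $I^n$. Given $\varepsilon>0$, I fix (in a way to be coordinated with Step 3) an index $k$ with $C_k$ very tight around $X$ and a triangulation $T$ of $C_k$ with $\operatorname{mesh}T<\varepsilon/2$; such a $T$ exists since a topological cube carries arbitrarily fine triangulations — transport a fine triangulation of $I^n$ through a homeomorphism $I^n\to C_k$ and subdivide, using uniform continuity.

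\emph{Step 2: the covering and its nerve.} Put $\mathcal U=\{\,\operatorname{St}(v,T)\cap X:\ v\in T^{(0)},\ \operatorname{St}(v,T)\cap X\ne\emptyset\,\}$, using open vertex stars. Then $\mathcal U$ is an open covering of $X$ (the stars already cover $C_k\supseteq X$) of mesh $<2\operatorname{mesh}T<\varepsilon$, since the closed star of a vertex has diameter at most twice the mesh. Using the standard fact that the nerve of the open--star covering of a simplicial complex is that complex, a short computation shows that $\mathcal N(\mathcal U)$ is simplicially isomorphic to the smallest subcomplex $L\subseteq T$ containing $X$, namely the subcomplex generated by the carriers of the points of $X$ (the closed simplices whose interiors meet $X$). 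So it remains to prove $L\cong D^n$.

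\emph{Step 3: $L$ is an $n$-cell — the crux.} Since $C_k$ can be taken inside $N_\delta(X)$ with $\delta$ as small as we please, $X$ is $\delta$-dense in $C_k$; hence if $\delta$ is smaller than the in-radius of every $n$-simplex of $T$, then each such simplex contains a point of $X$ in its interior, so $L=T\cong C_k\cong D^n$ and we are done. The genuine obstacle — and the step I expect to be the main difficulty — is that $C_k$ is only a \emph{topological} cube: a homeomorphism $I^n\to C_k$ can turn a well-shaped fine triangulation of $I^n$ into one with arbitrarily thin simplices, so the "in-radius $>\delta$" condition is not for free, and the dependence of the achievable fatness on $k$ must be controlled. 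There are two natural ways around this: (a) construct, directly inside $\mathbb R^n$, a fine triangulation of $C_k$ whose $n$-simplices are uniformly fat (fatness ratio not shrinking as $C_k$ is taken tighter), after which the density argument applies verbatim; or (b) drop the demand $L=T$ and instead show that $L$ — a compact polyhedron with $X\subseteq L\subseteq C_k\cap N_\varepsilon(X)$ — is, after a preliminary subdivision, a regular neighbourhood of $X$ squeezed between two cells of the defining sequence, $C_{m'}\subseteq\operatorname{int}L\subseteq L\subseteq\operatorname{int}C_m$, and invoke the fact that a regular neighbourhood of a cellular subset of $\mathbb R^n$ is itself an $n$-cell. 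Either route is exactly where the embedding hypothesis is used essentially, and where it matters that $X$ is \emph{cellular} rather than merely cell-like or acyclic (cf.\ the remark after Problem~\ref{AR}); the reduction, the mesh estimate and the nerve computation are all routine. Letting $\varepsilon\to 0$ then produces arbitrarily fine open coverings of $X$ with nerve homeomorphic to $D^n$.
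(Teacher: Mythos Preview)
Your reduction in Steps 1--2 is sound: with the covering indexed by vertices, the nerve of the restricted open--star covering is indeed the carrier subcomplex $L$ of $X$ in $T$. The gap is entirely in Step~3, and neither of the two routes you sketch actually closes it.

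Route (a) fails for a concrete metric reason. Let $X$ be an arc in $\mathbb R^2$ (certainly cellular). Any topological disk $C_k$ with $X\subset\operatorname{int}C_k\subset C_k\subset N_\delta(X)$ is contained in a strip of width $2\delta$ about $X$; every $2$--simplex of any triangulation of $C_k$ then has in--radius at most $\delta$, so your condition ``in--radius $>\delta$'' is unattainable no matter how $k$ and $T$ are coordinated. One can sometimes still force $L=T$ by choosing simplices that straddle $X$, but that is a different, shape--dependent argument you have not supplied, and it is unclear how to formulate it for an arbitrary cellular $X\subset\mathbb R^n$.

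Route (b) also breaks. The carrier $L$ need not be an $n$--manifold, nor even a neighborhood of $X$: if some $x\in X$ lies in the relative interior of a $k$--simplex $\tau$ with $k<n$ while the interiors of the $n$--simplices having $\tau$ as a face all miss $X$, then near $x$ the complex $L$ is only $k$--dimensional. Hence no inclusion $C_{m'}\subset\operatorname{int}L$ is guaranteed, ``regular neighborhood of $X$'' is not defined ($X$ is not a polyhedron), and the assertion that such a neighborhood is an $n$--cell is essentially the thing to be proved.

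The paper's proof avoids this obstacle by a genuinely different mechanism. It does \emph{not} restrict a star covering of a tight cube and hope the nerve survives. Instead it fixes one outer cube $D_1^n$, takes a fine star covering $\mathcal U$ of $D_1^n$ (whose nerve is $D^n$), canonizes it on $X$, and then \emph{manufactures} a covering of $X$ with the same nerve: for each $U_i$ disjoint from $X$ it follows the fiber of the cellular retraction $r:D_1^n\to D_{i_0}^n$ together with a small ball centered on $X$ to obtain a chain of elements of $\mathcal U$ from $U_i$ to $X$, and encodes that chain by the ``grating'' construction inside the kernel of an element meeting $X$; after merging the resulting pieces and filling in any missing simplices via the ``extension'' operation (Lemma~\ref{extension}) one obtains a covering $\mathcal W'$ of $X$ together with a simplicial isomorphism $\mathcal N(\mathcal W')\to\mathcal N(\mathcal U)\cong D^n$, with a direct diameter estimate. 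The retraction fibers and the canonization/grating/extension machinery are precisely what stand in for the fatness or regular--neighborhood control that your outline is missing.
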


It is well known that the class of all cellular spaces is quite
large, for example all $n-$dimensional compact cell-like spaces
$X$ can be embedded as cellular subsets of $\mathbb{R}^m,$
provided that $m \geq 2n + 2$ (see e.g. \cite{GS}).

We shall strengthen this fact as follows:
\begin{thm}\label{Thm:Main2}
Every $n-$dimensional cell-like compact space $X$ can be embedded
into
the $(2n+1)-$dimensional Euclidean space
$\mathbb{R}^{2n+1}$
as a cellular subset.
\end{thm}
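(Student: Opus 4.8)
The plan is to start from the classical general-position embedding of $X$ into $\mathbb{R}^{2n+1}$ and then "push" the image inside a nested system of cubes so as to make it cellular. The key structural fact to invoke is that an $n$-dimensional cell-like compactum $X$ embedded in $\mathbb{R}^{2n+1}$ has, by general-position/engulfing arguments (available since $2n+1 \ge n + 3$ once $n \ge 1$; the cases $n=0$ being trivial), arbitrarily small closed regular neighborhoods $N_1 \supset N_2 \supset \cdots$ with $\bigcap_i N_i = X$. So the first step is: produce such a neighborhood basis consisting of compact PL $(2n+1)$-manifolds with boundary.

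Second, I would argue that each $N_i$ can be taken to be a $(2n+1)$-cell, i.e. homeomorphic to $D^{2n+1}$. This is where cell-likeness enters decisively: because $X$ is cell-like and $LC^\infty$-trivially shaped, the inclusions $N_{i+1} \hookrightarrow N_i$ are homotopically trivial in the limit, so the neighborhoods are eventually simply connected with trivial homology, hence (being compact PL manifolds of dimension $\ge 5$ with simply connected boundary) homotopy $(2n+1)$-balls; by the (high-dimensional) Poincaré/$h$-cobordism theorem they are genuine PL balls. One must be slightly careful that the $N_i$ can simultaneously be chosen to be balls AND to form a nested basis with $N_{i+1} \subset \operatorname{int} N_i$ — this is arranged by taking a rapidly converging subsequence and, if necessary, thickening/absorbing: given a ball neighborhood $N$ of $X$ and a smaller manifold neighborhood $M \subset \operatorname{int} N$, replace $M$ by a ball neighborhood $M'$ with $M \subset M' \subset \operatorname{int} N$ using the same regular-neighborhood-is-a-ball argument relativized inside $\operatorname{int} N$.

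Third, once we have $X = \bigcap_{i=1}^\infty N_i$ with each $N_i \cong D^{2n+1}$ and $N_{i+1} \subset \operatorname{int} N_i$, this is literally the definition of a cellular subset of $\mathbb{R}^{2n+1}$, and the theorem follows. I would also record the low-dimensional exceptions separately: for $n=0$, a cell-like (hence connected, one-point-shaped) compactum of dimension $0$ is a point, trivially cellular in $\mathbb{R}^1$; for $n=1$, $2n+1 = 3$ and one invokes the $3$-dimensional results (a cell-like compactum in a $3$-manifold has a mapping-cylinder/regular-neighborhood basis by McMillan's cellularity criterion, and $1$-dimensional cell-like compacta are cellular in $\mathbb{R}^3$ — this is classical).

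The main obstacle I anticipate is exactly the simultaneous control in the second step: ensuring the ball neighborhoods can be chosen nested and cofinal. The engulfing/regular-neighborhood machinery gives balls "somewhere between" any two given neighborhoods, but threading this through countably many stages while keeping $\bigcap N_i = X$ requires an inductive construction choosing, at stage $i$, a ball $N_{i+1}$ with $X \subset \operatorname{int} N_{i+1} \subset N_{i+1} \subset \operatorname{int} N_i$ and $\operatorname{diam}(N_{i+1}) < 1/i$ — the existence of such an $N_{i+1}$ being the cell-like-ness input. Everything else (general position embedding into $\mathbb{R}^{2n+1}$, the identification of the resulting set as cellular) is standard.
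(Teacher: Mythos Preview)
Your step~2 contains a genuine gap. From the cell-likeness of $X$ you correctly extract that (after passing to a subsequence) the inclusions $N_{i+1}\hookrightarrow N_i$ are null-homotopic, but this does \emph{not} imply that any individual $N_i$ is simply connected or acyclic, so the appeal to the $h$-cobordism/Poincar\'e theorem never gets off the ground. Concretely: take $X$ to be a Fox--Artin wild arc in $\mathbb{R}^3$ (so $n=1$, $2n+1=3$). This $X$ is cell-like (it is an arc), and it has a neighborhood basis of compact PL $3$-manifolds $N_i$; each inclusion $N_{i+1}\hookrightarrow N_i$ is null-homotopic; yet none of the $N_i$ is a $3$-ball, and $X$ is not cellular for this embedding. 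The same phenomenon persists in higher codimension. The moral is that cellularity is a property of the \emph{embedding}, not of $X$ alone, and any proof must at some point control the complement $U\setminus X$, not merely the homotopy type of the neighborhoods themselves. Your proposal never looks at the complement.

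The paper's proof supplies exactly this missing ingredient. It first builds a \emph{specific} embedding of $X$ into $\mathbb{R}^{2n+1}$ (the pseudo-polyhedral embedding of Lemma~\ref{Pseudo-polyhedral}): $X=\bigcap_i N_i$ where each $N_i$ is a $q_i$-regular neighborhood of an $n$-dimensional polyhedral spine $R_i$, equipped with a $q_i$-push. For $n\ge 2$ (so $2n+1\ge 5$) it then verifies McMillan's cellularity criterion directly: given neighborhoods $U\supset V\supset X$ with $V\hookrightarrow U$ null-homotopic, any loop in $V\setminus X$ bounds a singular $2$-disk in $U$; by general position---and here is where the dimension count $2+n<2n+1$ is actually used---the disk may be taken disjoint from the spine $R_i$, and then the $q_i$-push moves the disk off $N_i\supset X$ while keeping it in $U$. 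Thus loops in $V\setminus X$ bound in $U\setminus X$, which is McMillan's criterion. For $n=1$ the paper sidesteps $3$-manifold issues altogether: by the Case--Chamberlin theorem a $1$-dimensional cell-like continuum is tree-like, so one can arrange the spines $R_i$ to be trees, whence the $N_i$ are $3$-balls outright.
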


\begin{cor}\label{contractible}
Every $n-$dimensional cell-like (in particular, contractible)
compact space $X$ admits arbitrarily fine open coverings whose
nerves are all homeomorphic to
the $(2n+1)-$dimensional cube
$D^{2n+1}.$
\end{cor}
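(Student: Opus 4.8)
The plan is to deduce this as a direct consequence of the two preceding theorems.  Let $X$ be an $n$-dimensional cell-like compact space.  By Theorem~\ref{Thm:Main2}, $X$ embeds into $\mathbb{R}^{2n+1}$ as a cellular subset; fix such an embedding and identify $X$ with its image.  Now apply Theorem~\ref{Thm:Main} with the ambient dimension equal to $2n+1$: since $X$ is a cellular subspace of $\mathbb{R}^{2n+1}$, it admits arbitrarily fine open coverings whose nerves are all homeomorphic to the $(2n+1)$-dimensional cube $D^{2n+1}$.  This is exactly the assertion of the corollary.

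For the parenthetical remark, I would note that every contractible compact space is in particular cell-like (a contractible space is cell-like, since contractibility implies that the space has the shape of a point, which is the definition of cell-likeness).  Hence the contractible case is subsumed by the cell-like case, and no separate argument is needed.

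There is essentially no obstacle here, since both inputs are already established in the excerpt; the only thing to be careful about is the bookkeeping of dimensions — namely, that Theorem~\ref{Thm:Main} is being invoked with $n$ replaced by $2n+1$, so that the resulting nerves are copies of $D^{2n+1}$ rather than $D^n$.  One should also remark that "arbitrarily fine" is meant with respect to the metric that $X$ inherits from $\mathbb{R}^{2n+1}$ (or, equivalently by compactness, with respect to any fixed open cover of $X$), so that the combination of the two theorems is legitimate without any further uniformity hypotheses.
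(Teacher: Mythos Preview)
Your proof is correct and is exactly the intended argument: the paper states this as an immediate corollary of Theorems~\ref{Thm:Main} and~\ref{Thm:Main2} without giving a separate proof, and your combination of the two (embedding $X$ cellularly in $\mathbb{R}^{2n+1}$ via Theorem~\ref{Thm:Main2}, then applying Theorem~\ref{Thm:Main} in ambient dimension $2n+1$) is precisely what is meant.
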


Note that there exist $n-$dimensional contractible compacta which
are nonembeddable into $\mathbb{R}^{2n}$ (see e.g.
\cite{KaRe2001}).

Finaly, we shall show that there exist acyclic planar spaces whose
fine covering are all nonacyclic. These spaces are of course not
compact because there is a classical result that any planar
acyclic compactum is cellular (see  e.g. \cite{B,D,M}) and by our Theorem \ref{Thm:Main}, they have fine
coverings whose nerves are all homeomorphic to the $2$-cell $D^2$.

\section{Preliminaries}

We shall begin by fixing some terminology and notations and we
shall give some definitions which will be used in the sequel. All
undefined terms can be found in \cite{B,D,E,H,HW,
SeTh,SE}.

By a {\it covering} $\mathcal{U}$ of $X$ we mean a system of open
subsets of a metric space $X$ whose union  is $X$. If the
space $X$ is compact then by a {\it covering} we mean a finite covering.
We consider the standard metric $\rho$ on the Euclidean space
$\mathbb{R}^n$ and its subspaces. For a subspace $A$ of the space
$X$ and for a positive number $d$  we denote the $d-$neighborhood
of the set $A$ in $X$ by $N(A, d)$, i.e. $$N(A, d) = \{x: x\in X \
\mbox{and}\  \rho(x, A) < d\}.$$ In particular, the open ball
$B(a, d)$ in a metric space with center at the point $a$ and
radius $d$ is the set $N(\{a\}, d)$. By the {\it mesh} of the
covering $\mathcal{U},$\ \ ${\rm mesh}(\mathcal{U}),$ we mean the
supremum of the diameters of all elements of the covering
$\mathcal{U}$. We say that the space admits {\it  fine acyclic}
coverings if for every open covering $\mathcal{U}$ there exists a
refinement $\mathcal{V}$ of $\mathcal{U}$ such that homology of
the nerve $\mathcal{N}(\mathcal{V})$ is trivial, i.e. homology of
$\mathcal{N}(\mathcal{V})$ is the same as homology of the point.
For compact spaces this is equivalent to existence of acyclic
coverings $\mathcal{V}$ with mesh$(\mathcal{V}) < \varepsilon,$
for every positive number $\varepsilon$.

We consider only {\v C}ech homology with integer coefficients.

\begin{dfn}
A {\rm kernel} $U_i^0$ of the element $U_i$ of the covering
$\mathcal{U} = \{U_i\}_{i=\overline{1,n}}$ of the space $X$ is an
open non-empty subset of $U_i$ such that it does not intersect
with other elements $U_j, j \neq i,$ of the covering
$\mathcal{U}.$
\end{dfn}

\begin{dfn}
A covering $\mathcal{U} = \{U_i\}_{i=\overline{1,n}}$ is called
{\rm canonical} on the subspace $A\subset X$ if for every $i$ such
that $U_i\cap A \neq \emptyset$ it follows that $U_i^0\cap A \neq
\emptyset.$
\end{dfn}

\begin{dfn}
A canonical covering $\mathcal{U} = \{U_i\}_{i=\overline{1,n}}$
on the subspace $A$ is called a {\rm canonization} of the covering
$\mathcal{V} = \{V_i\}_{i=\overline{1,n}}$ if $U_i \subset V_i$
for every $i,$ and this refinement induces a simplicial
isomorphism between the nerves $\mathcal{N}(\mathcal{U})$ and
$\mathcal{N}(\mathcal{V}).$
\end{dfn}

\begin{lmm}\label{canonization}
For every finite covering $\mathcal{V} =
\{V_i\}_{i=\overline{1,n}}$ of a metric space $X$ and its subset
$A$ without isolated points there exists a canonization
$\mathcal{U}$ of the covering $\mathcal{V}$ on the subspace $A.$
\end{lmm}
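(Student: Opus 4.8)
The plan is to obtain $\mathcal U$ from $\mathcal V$ by deleting from each element only finitely many small closed balls, chosen so that the deleted balls are exactly the kernels demanded by the canonicity condition.

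We may assume that every $V_i$ is nonempty. First, for each simplex $S$ of $\mathcal N(\mathcal V)$ --- i.e. each nonempty $S\subseteq\{1,\dots ,n\}$ with $V_S:=\bigcap_{i\in S}V_i\neq\emptyset$ --- fix a point $x_S\in V_S$, and let $F$ denote the finite set of all these points. Next put $I_A=\{\,i : A\cap V_i\neq\emptyset\,\}$ and invoke the hypothesis: for each $i\in I_A$ the set $A\cap V_i$ is nonempty and open in $A$, hence itself has no isolated points and so is infinite, and we may choose $a_i\in(A\cap V_i)\setminus F$. Since the finitely many $a_i$ are pairwise distinct points of the metric space $X$, we may then pick radii $r_i>0$ so small that the closed balls $\overline{G_i}:=\overline{B(a_i,r_i)}$, $i\in I_A$, are pairwise disjoint and each satisfies $\overline{G_i}\subseteq V_i\setminus F$. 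Finally set
\[
U_i:=V_i\setminus\bigcup_{i'\in I_A\setminus\{i\}}\overline{G_{i'}}\qquad(i=1,\dots ,n).
\]

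One then checks the four requirements. (i) $U_i\subseteq V_i$ by construction. (ii) $\mathcal U$ covers $X$: a point lying in some $\overline{G_{i'}}$ lies in no other such ball, hence in $U_{i'}$; a point lying in none of the $\overline{G_{i'}}$ remains in whichever $V_i$ contained it, hence in $U_i$. (iii) For every simplex $S$ of $\mathcal N(\mathcal V)$ and every $i\in S$ we have $x_S\in U_i$, since $x_S\in V_i$ while $x_S\in F$ lies in no $\overline{G_{i'}}$; thus $x_S\in\bigcap_{i\in S}U_i$, and the case $S=\{i\}$ gives $U_i\neq\emptyset$. Hence $\mathcal N(\mathcal U)\supseteq\mathcal N(\mathcal V)$, the reverse inclusion is automatic from $U_i\subseteq V_i$, the two complexes share the vertex set $\{1,\dots ,n\}$, and the identity on indices is the asserted simplicial isomorphism. (iv) $\mathcal U$ is canonical on $A$: if $U_i\cap A\neq\emptyset$ then $V_i\cap A\neq\emptyset$, so $i\in I_A$ and $G_i$ is defined; pairwise disjointness of the closed balls gives $G_i\subseteq U_i$, deletion of $\overline{G_i}$ from $V_j$ for every $j\neq i$ gives $G_i\cap U_j=\emptyset$, and $a_i\in A\cap G_i$, so $U_i^{0}:=G_i$ is a kernel of $U_i$ meeting $A$.

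The argument is short, and I expect the only real care to be organizational: the selections must be made in the order $x_S$, then $a_i$, then $r_i$, and the $x_S$ must be taken for every simplex (not merely the maximal ones) so that nonemptiness of each $U_i$ is preserved visibly. The one substantive ingredient --- and the sole place the hypothesis is used --- is the construction of the $a_i$: ``$A$ has no isolated points'' is precisely what lets every member of $\mathcal V$ that meets $A$ acquire a small private ball that still meets $A$.
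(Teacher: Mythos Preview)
Your argument is correct and follows essentially the same strategy as the paper's: fix witness points for every simplex of $\mathcal{N}(\mathcal{V})$, fix kernel-points $a_i\in A\cap V_i$ for each $i$ with $V_i\cap A\neq\emptyset$, and delete a small closed ball around each $a_j$ from every $V_i$ with $i\neq j$. The only differences are cosmetic---the paper places all of its witness points inside $A$ (writing them as $a_{i_0i_1\dots i_k}\in V_{i_0}\cap\cdots\cap V_{i_k}\cap A$), whereas you allow $x_S\in V_S$ arbitrary and keep the $a_i$ separate---and you should say explicitly that the $a_i$ are chosen pairwise distinct, which is immediate from the infiniteness of each $(A\cap V_i)\setminus F$ that you already established.
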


\begin{proof}
In every nonempty intersection $V_{i_0}\cap V_{i_0}\cap\cdots
V_{i_k}\cap A$ let us choose a point $a_{i_0i_1\dots i_k}$ such
that to different systems of open sets there correspond different
points (this is possible because the set $A$ does not contain
isolated points). We get a finite set of points. 

Let $d$ be any
positive number less than the minimum of the distances between the
chosen points and such that if the intersection $V_i\cap A \neq
\emptyset$ then $B(a_i, d)\subset V_i.$ Let $U_i = V_i \setminus
\bigcup \overline{B(a_j,\frac{d}{2}})$ (the union is over all $j,
\ j \neq i$ for which the point $a_j$ is defined, i.e. $V_j\cap A
\neq \emptyset$). The nerves of the coverings $\mathcal{U}$ and
$\mathcal{V}$ are isomorphic since we did not remove the points
$a_{i_0i_1\dots i_k}$ from the space  $X$ and since the balls
$B(a_i, \frac{d}{2})$ lie in the kernel of the $U_i.$ 

Therefore
the covering $\mathcal{U}$ is a canonization of the covering
$\mathcal{V}$ on the subspace $A.$
\end{proof}

\begin{dfn}
By a {\rm chain connecting the element} $U$ of the covering
$\mathcal{U}$ with subset $A$ along the connected subset $M$ of
the space $X$ we mean a system $\{U_{i_1}, U_{i_2},\dots
U_{i_m}\}$ of elements of $\mathcal{U}$ such that $U_{i_1} = U,$
$U_{i_k}\cap A = \emptyset$ for $k < m,$ $U_{i_m}\cap A \neq
\emptyset$  and $U_{i_t}\cap U_{i_{t+1}}\cap M \neq \emptyset$ for
$t = \overline{1, (m-1)}.$
\end{dfn}

Next,
we shall
need the following construction. Consider the covering
$\mathcal{U}$ of a topological space $X$ and the 4-tuple $\{U, x,
\varepsilon, m\}$ in which $U\in \mathcal{U},$ $x\in U^0$ ($U^0$
is kernel of the $U),$ $\varepsilon$ is a positive number such
that $B(x, \varepsilon) \subset U^0,$ and $m$ is any natural
number. Consider the covering $\mathcal{U}'$ which consists of all
elements of the covering $\mathcal{U}$ except the element $U.$
Instead of $U$ we choose the following $m$ elements for $m>1$:
\begin{itemize}
    \item $U(x, \varepsilon, m, 1) = U\setminus \overline{B(x, \frac{\varepsilon}{2})}$,
    \item $U(x, \varepsilon, m, k) = B(x, \frac{\varepsilon}{k-1})\setminus \overline{B(x,
    \frac{\varepsilon}{k+1})},$ for $k > 1, k < m,$
    \item $U(x, \varepsilon, m, m) = B(x,
    \frac{\varepsilon}{m-1}).$
\end{itemize}
If $m = 1,$ then $U(x, \varepsilon, 1, 1) = U.$ The covering
$\mathcal{U}'$ is called the {\it grating} of $\mathcal{U}$ with
respect to the 4-tuple $\{U, x, \varepsilon, m\}.$

If the element $U$ of the covering $\mathcal{U}$ is connected then
$\mathcal{N}(\mathcal{U}') = \mathcal{N}(\mathcal{U})\cup P,$
where $P$ is a segment subdivided into $m-1$ parts if $m>1$ and it
is the empty set if $m=1.$

Let $\mathcal{U}$ be any covering of the space $X$ which refines a
covering $\mathcal{W}$ and let $\varphi : \mathcal{N}(\mathcal{U})
\to \mathcal{N}(\mathcal{W})$ be a simplicial mapping induced by
this refinement. Suppose that $\{U_{i_0}, U_{i_1},\dots,
U_{i_m}\}$ are subsets of the covering $\mathcal{U}$ having empty
intersection.

\begin{dfn}\label{dfn:extension}
The covering $\mathcal{W}$ is called an {\rm extension} of the
covering $\mathcal{U}$ with respect to the set $\{U_{i_0},
U_{i_1},\dots, U_{i_m}\}$ if the mapping $\varphi$ is injective
and the complex $\mathcal{N}(\mathcal{W})$ is the union of the
complex $\mathcal{N}(\mathcal{U})$ with an $m-$dimensional simplex
corresponding to $\{U_{i_0},U_{i_1},\dots, U_{i_m}\}$ and possibly
some of its faces.
\end{dfn}

\begin{lmm}\label{extension}
For every covering $\mathcal{U}$ canonical on the set $A$ and for
every system of its elements $\{U_{i_0}, U_{i_1},\dots U_{i_m}\}$
such that $\bigcap_{t=0}^{m}U_{i_t} = \emptyset$ and $A\cap
U_{i_t}\neq \emptyset$ there exists for every $t,$ a covering
canonical on $A$ which is an extension of the covering
$\mathcal{U}$ with respect to $\{U_{i_0}, U_{i_1},\dots
U_{i_m}\}.$
\end{lmm}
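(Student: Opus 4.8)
The goal is to enlarge some element $U_{i_t}$ of the covering so that the $m+1$ sets $U_{i_0},\dots,U_{i_m}$ acquire a common point, thereby gluing an $m$-simplex onto the nerve, while keeping the nerve otherwise unchanged and keeping the covering canonical on $A$. The plan is to do this surgery locally near a single well-chosen point of $A$.

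First I would use the hypothesis $A\cap U_{i_t}\neq\emptyset$ for all $t$ together with the fact that $\mathcal{U}$ is canonical on $A$: canonicity gives a point $a_t\in U_{i_t}^0\cap A$ in the kernel of each $U_{i_t}$. Actually the point I really want is in $A$ but I do not yet know that the intersection of all the $U_{i_t}$ with $A$ is nonempty — indeed it may be empty. So instead I would pick one index, say $t=0$ (the general $t$ is symmetric), fix a point $a\in U_{i_0}^0\cap A$, and choose a small radius $\eta>0$ with $\overline{B(a,\eta)}\subset U_{i_0}^0$, so that in particular $B(a,\eta)$ meets no other element of $\mathcal{U}$ at all. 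Next, for each of the remaining indices $t=1,\dots,m$, pick a point $b_t\in U_{i_t}\cap A$; since $A$ has no isolated points (this is implicit in the canonization setup, and can be assumed here as in Lemma \ref{canonization}) and $U_{i_t}$ is open, I can in fact take the $b_t$ distinct from $a$ and from each other, and choose radii $\delta_t>0$ with $B(b_t,\delta_t)\subset U_{i_t}$.

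Now comes the actual construction. Choose inside $U_{i_0}$ a thin "tentacle'': an open connected set $T$ (for instance a union of a polygonal tube inside $\mathbb{R}^n$, or more abstractly a chain of small balls) that starts at $a$ and runs out to touch each $B(b_t,\delta_t)$, arranged so that $T$ together with each $B(b_t,\delta_t)$ shares a common point, hence all of $U_{i_0}\cup T$, $U_{i_1},\dots,U_{i_m}$ have the common point; and arranged so that $T$ avoids the kernels $U_{i_t}^0$ ($t\ge1$) — we only need $T$ to enter the collar $U_{i_t}\setminus U_{i_t}^0$ near $b_t$ — and avoids all the chosen kernel points witnessing the rest of the nerve. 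Set $W_{i_0}=U_{i_0}\cup T$ and $W_j=U_j$ for $j\neq i_0$; let $\mathcal{W}$ be the resulting covering. Because $T$ lies inside the ambient chart and can be made disjoint from every element of $\mathcal{U}$ except the $U_{i_t}$, enlarging $U_{i_0}$ to $W_{i_0}$ creates no new nonempty intersections beyond the single new face $\{U_{i_0},\dots,U_{i_m}\}$ (and possibly some of its subfaces, which are allowed by Definition \ref{dfn:extension}); so $\varphi:\mathcal{N}(\mathcal{U})\to\mathcal{N}(\mathcal{W})$ is injective and $\mathcal{N}(\mathcal{W})=\mathcal{N}(\mathcal{U})$ with one $m$-simplex attached. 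Finally, $\mathcal{W}$ is still canonical on $A$: the kernels $U_j^0$ for $j\neq i_0$ are unchanged and still meet $A$ when $U_j$ does, and $W_{i_0}^0\supseteq U_{i_0}^0$ still meets $A$ at $a$; for the new face to be compatible we only need the sets to intersect, which they do.

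The main obstacle I anticipate is the bookkeeping in the middle step: one must route the tentacle $T$ so that it creates \emph{exactly} the one desired top-dimensional face and none of the spurious intersections that would change $\mathcal{N}(\mathcal{U})$ elsewhere or would make $\varphi$ fail to be injective. This is where the detailed choice of small balls, distinct witness points, and the disjointness of $B(a,\eta)$ from the rest of the covering is essential; the rest of the argument is a routine check that kernels and hence canonicity survive.
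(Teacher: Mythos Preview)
Your plan has a genuine gap in the key step. You propose to enlarge \emph{only one} element, setting $W_{i_0}=U_{i_0}\cup T$ and $W_j=U_j$ for $j\neq i_0$, and you claim that routing the tentacle $T$ so that it ``together with each $B(b_t,\delta_t)$ shares a common point'' yields a common point of $W_{i_0},U_{i_1},\dots,U_{i_m}$. But that inference fails for $m\ge 2$: touching each $B(b_t,\delta_t)$ gives you only the $1$-faces $W_{i_0}\cap U_{i_t}\neq\emptyset$, not the full intersection $\bigcap_{t=0}^m W_{i_t}$. For the $m$-simplex to appear you would need a single point lying in $U_{i_1}\cap\cdots\cap U_{i_m}$ for $T$ to reach, and no hypothesis guarantees that this intersection is nonempty. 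So enlarging only $U_{i_0}$ cannot, in general, produce the required top face. A secondary issue is that your tentacle construction tacitly assumes an ambient Euclidean (or at least path-connected) structure, while the lemma is stated for a covering of an arbitrary metric space $X$.

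The paper's argument sidesteps both problems with a one-line move: pick a ball $B(a,d)$ inside the kernel $U_{i_0}^0$ (so $B(a,d)$ is disjoint from every $U_j$, $j\neq i_0$) and replace \emph{each} $U_{i_t}$, $t=0,\dots,m$, by $U_{i_t}\cup B(a,d)$. Now all $m+1$ new sets contain $B(a,d)$, so the $m$-simplex appears; and since $B(a,d)$ meets no other element of $\mathcal{U}$, the only new faces are subfaces of $\{U_{i_0},\dots,U_{i_m}\}$, exactly as Definition~\ref{dfn:extension} allows. Canonicity on $A$ survives because the old kernels $U_{i_t}^0$ (for $t\ge 1$) persist, and one chooses $d$ small enough that $U_{i_0}^0\setminus B(a,d)$ still meets $A$. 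No connectivity or routing is needed. The fix to your argument is thus not to build a tentacle from one set to the others, but to add the \emph{same} small ball to all of $U_{i_0},\dots,U_{i_m}$ at once.
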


\begin{proof}
In the kernel of one of the sets $U_{i_0}, U_{i_1},\dots U_{i_m}$
choose a ball $B(a, d)$ and replace $U_{i_t}$ by $U_{i_t}\cup B(a,
d)$ for every $t =\overline{0,m}.$ We obviously get the desired
extension.
\end{proof}

Subspace $X$ of $\mathbb{R}^n$ is cellular if and only the
quotient space $\mathbb{R}^n/X$ is homeomorphic to $\mathbb{R}^n$
(see  e.g. \cite{GS}). Therefore we can assume that for a
cellular subset $X,$ $X = \bigcap_{i=1}^{\infty}D^n_i, \
\mbox{where} \ D^n_{i+1} \subset {\rm int} D^n_i,$ there exists a
retraction $r_i: D^n_{i} \to D^n_{i+1}$  such that preimage of
every point $x$ of the boundary $\partial D^n_{i+1}$ is
homeomorphic to the segment $[0, 1].$

\begin{dfn} {\rm (}\cite{GS, H}{\rm )}. A polyhedral neighborhood $N$ of the polyhedron $P \subset
\mathbb{R}^n$  is called {\rm regular} if there exists a piecewise
linear mapping $\varphi: N\times [0, 1] \to N,$ such that
$\varphi(x, 0) = x,$  $\varphi(x, 1)\in P $ for all $x\in N$ and
$\varphi(x, t) = x$ for $x\in P$ and $t\in [0,1]$ or in other
words, $P$ is a strong deformation retract of $N$ under a
piecewise linear homotopy $\varphi.$
\end{dfn}

\begin{dfn} {\rm (}\cite{GS, H}{\rm )}. An {\rm $\varepsilon$ push of the pair $(\mathbb{R}^n, X)$}
is a homeomorphism $h$ of $\mathbb{R}^n$ to itself for which there
exists a homotopy $\varphi: \mathbb{R}^n\times [0,1]\to
\mathbb{R}^n$ such that
\begin{enumerate}
    \item $\varphi(x,0) = x,\ \varphi(x,1) = h(x)$;
    \item $\varphi_t:\mathbb{R}^n \to \mathbb{R}^n$ is a homeomorphism for every $t\in
    [0,1]$ and $\rho(x, \varphi_t(x) )< \varepsilon$ for all $x\in
    \mathbb{R}^n;$
    \item $\varphi(x,t) = x$ for every $t\in [0, 1]$ and all $x$
    such that $\rho(x, X) \geq \varepsilon.$
\end{enumerate}
\end{dfn}

\begin{dfn} {\rm (}\cite{GS, H}{\rm )}.
Let $P$ be a compact subpolyhedron of $\mathbb{R}^n$ and let
$\varepsilon$ be a positive real number. {\rm An
$\varepsilon-$regular neighborhood} of $P$ in $\mathbb{R}^n$ is a
regular neighborhood $N$ of $P$ such that for any compact subset Y
of $\mathbb{R}^n\setminus P$, there is an $\varepsilon-$push $h$
of $(\mathbb{R}^n, P)$ such that $h(Y) \cap N = \emptyset.$
\end{dfn}

We note that it follows by definition that every
$\varepsilon-$regular neighborhood $N$ of subpolyhedron $P$ is a
proper subset of $N(P, \varepsilon).$
\medskip

The following lemma follows by the regular neighborhood theory
(see  e.g. \cite{GS, H}).

\begin{lmm}\label{RegularNeighborhood}
For any finite subpolyhedron $P$ of $\mathbb{R}^n$ and any
$\varepsilon > 0$ there exists an $\varepsilon-$regular
neighborhood $N$ of $P.$
\end{lmm}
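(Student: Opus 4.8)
The plan is to prove Lemma \ref{RegularNeighborhood}, namely that every finite subpolyhedron $P \subset \mathbb{R}^n$ has an $\varepsilon$-regular neighborhood for each $\varepsilon > 0$. The strategy is to first produce an ordinary regular neighborhood that is small enough, and then to verify the extra ``pushing'' property by a standard general-position and collapsing argument. Concretely, I would begin by choosing a triangulation $T$ of $\mathbb{R}^n$ in which $P$ appears as a full subcomplex and whose mesh near $P$ is much smaller than $\varepsilon$; the simplicial (second derived) neighborhood $N$ of $P$ in $T$ is then a regular neighborhood of $P$ contained in $N(P,\varepsilon/2)$, and it carries the piecewise linear strong deformation retraction $\varphi\colon N\times[0,1]\to N$ onto $P$ guaranteed by regular neighborhood theory. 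This handles the ``regular neighborhood'' half of the definition; it remains to arrange the $\varepsilon$-push property.

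For the pushing property, fix a compact subset $Y$ of $\mathbb{R}^n\setminus P$. The point is that $P$ has codimension zero issues only when $\dim P = n$; if $\dim P < n$ one can use general position to slide $Y$ off a neighborhood of $P$, while if $P$ is $n$-dimensional its interior is already disjoint from $Y$ (as $Y$ is compact and misses $P$) and one only needs to push $Y$ away from a collar of $\partial P$. In either case I would take the mesh of $T$ so fine that $N \subset N(P,\varepsilon)$ strictly and so that $\mathrm{dist}(Y, P) > \delta$ for some $\delta$ with the whole construction supported in the $\varepsilon$-ball around $P$. Using the PL collar structure of $\partial N$ in $\mathbb{R}^n\setminus \mathrm{int}\,P$ together with general position of the compact polyhedral (or, after a preliminary small PL approximation, polyhedral) set $Y$, one builds an ambient isotopy $\varphi_t$ of $\mathbb{R}^n$, fixed outside $N(P,\varepsilon)$, that moves each point less than $\varepsilon$ and pushes $Y$ entirely outside $N$. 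Setting $h=\varphi_1$ gives the required $\varepsilon$-push with $h(Y)\cap N=\emptyset$. Taking $\varepsilon$ small enough at the outset also gives the noted consequence that $N$ is a proper subset of $N(P,\varepsilon)$.

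The main obstacle is the pushing step: one must produce a single $\varepsilon$-push that works for an \emph{arbitrary} compact $Y\subset \mathbb{R}^n\setminus P$, with the support of the isotopy confined to $N(P,\varepsilon)$ and the track of every point bounded by $\varepsilon$. This requires invoking the PL isotopy extension theorem and the existence of an external collar on $\partial N$ to ``shrink'' $N$ toward $P$ inside its $\varepsilon$-neighborhood while dragging $Y$ along; the delicate points are keeping the isotopy the identity outside the prescribed $\varepsilon$-neighborhood and uniform control of displacement. Since the statement explicitly says it ``follows by the regular neighborhood theory,'' I would cite \cite{GS, H} for the relevant isotopy-extension and collaring results and present the construction above as the assembly of those ingredients, rather than reproving them.
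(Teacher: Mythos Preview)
The paper does not actually prove this lemma: it simply states that it ``follows by the regular neighborhood theory'' and cites \cite{GS, H}, with no further argument. Your proposal therefore already goes beyond what the paper provides, and your closing remark---that you would cite \cite{GS, H} for the isotopy-extension and collaring ingredients rather than reprove them---is precisely what the paper does.

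Your sketch of the underlying argument is broadly on target, but one point of framing is off. The case split on $\dim P < n$ versus $\dim P = n$ and the appeal to general position (or PL approximation) of $Y$ are not the right mechanism: $Y$ is an arbitrary compact subset of $\mathbb{R}^n\setminus P$ and need not admit any polyhedral approximation. The push does not depend on the fine structure of $Y$ at all, only on $\delta = \operatorname{dist}(Y,P) > 0$. The standard construction uses the mapping-cylinder structure of a second-derived regular neighborhood in a sufficiently fine triangulation: $N\setminus P$ is fibered by short PL arcs over $\partial N$, and one builds an ambient isotopy supported in $N(P,\varepsilon)$ that slides points outward along (extensions of) these fibers so that everything at distance $\geq \delta$ from $P$ ends up outside $N$, with displacement bounded by the fiber length and hence by $\varepsilon$. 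This is exactly the content supplied by \cite{GS, H}, so deferring to those references, as both you and the paper do, is appropriate.
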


\section{Proof of Theorem~\ref{Thm:Main} }

Since $X$ is a cellular subset of $\mathbb{R}^n$ we have $$X =
\bigcap_{i=1}^{\infty}D^n_i, \ \mbox{where} \ D^n_{i+1} \subset
{\rm int} D^n$$ and there are natural retractions $r_i: D^n_i \to
D^n_{i+1}.$

Fix a positive number $\varepsilon$ and some natural number $K$
which will be specified later. Since $X \subset {\rm int}\ D_1$
and $X$ is a compact space there exists a finite system of open
balls of radius $\varepsilon' < \frac{\varepsilon}{K}$ in
$\mathbb{R}^n$ which cover $X,$ i.e. $X \subset \cup_{x \in F}
B(x, \varepsilon'),$ $F$ is a finite subset of $X$ for which
$\cup_{x \in F} B(x, \varepsilon')\subset D^n_1.$ There exists an
index $i_0$ such that $D_{i_0}^n \subset \cup_{x\in F} B(x,
\varepsilon').$ Let $r$ be a natural retraction of $D_1^n$ on
$D_{i_0}^n.$  Since the mapping $r$ is uniformly continuous there
exists a positive number $\delta < \varepsilon',$ such that
$\varrho (r(x), r(y)) < \varepsilon'$ whenever $\varrho (x, y) <
\delta.$ 

Consider a triangulation of $D_1^n$ with the diameters of
simplices less than $\frac{\delta}{2}.$ Consider a covering of
$D_1^n$ by open stars of the vertices of this triangulation.
According to Lemma \ref{canonization} there exists a refinement
$\mathcal{U} = \{U_i\}_{i=\overline{1,m}}$ of this covering which
is canonical on $X$ . Note that the nerve
$\mathcal{N}(\mathcal{U})$ is homeomorphic to $D^n.$

We wish to associate to every open set $U_i$ of the $\mathcal{U}$
some open subset of the space $X.$ If the intersection $U_i\cap X$
is nonempty then we associate to $U_i$ the open set $U_i\cap X$ in
$X.$ If $U_i\cap X = \emptyset$ then we choose the point $y_i \in
U_i.$ The point $r(y_i)$ belongs to some ball $B(x_{i},
\varepsilon'),$ $x_i\in F,$ and the subspace $r^{-1}(r(y_i))$ is
homeomorphic to a segment if $y_i \notin D_{i_0}$ or is a point if
$y_i \in D_{i_0}.$ So the union $M_i = r^{-1}(r(y_i)) \cup B(x_i,
\varepsilon')$ is a connected set. 

Since $M_i$ is connected there obviously
exists  a chain $\{U_{i_1}, U_{i_2},\dots U_{i_{m(i)}}\}$
connecting $U_i$  with $X$ along $M_i.$ Since the covering
$\mathcal{U}$ is canonical on $X,$ the intersection of the kernel
of $U_{i_{m(i)}}$ and $X$ is nonempty, and we can find a point
$z_i \in U_{i_{m(i)}}^0\cap X.$ Let $\varepsilon_i$ be a positive
number such that $B(z_i, \varepsilon_i)\cap X\subset
U_{i_{m(i)}}^0\cap X.$ So we have a 4-tuple $\{U_{i_{m(i)}}\cap X,
z_i, \varepsilon_i, m(i)\}$ and we can take a grating of
$\mathcal{U}|_X = \{U_i\cap X\}_{i=\overline{1,m}}$ with respect
to this 4-tuple.

Repeat this procedure for all $i.$ We get some canonical covering
$\mathcal{U}'$ of $X.$ There is a simplicial mapping $\mathcal{J}:
\mathcal{N}(\mathcal{U}')\to \mathcal{N}(\mathcal{U})$ which maps
the vertices $U(z_i, \varepsilon_i, m(i), k)$ to the vertices
$U_{i_k}.$ This mapping is in general not injective because some
element $U_k\in \mathcal{U}$ can be the element of several chains
of the type $\{U_{i_1}, U_{i_2},\dots U_{i_{m(i)}}\}.$ 

Consider a
new covering $\mathcal{W}$ whose 
elements are unions of all
elements of $\mathcal{U}'$ which correspond to the elements $U_k$
under a mapping $\mathcal{T}.$

Let us estimate the diameters of the elements of $\mathcal{W}$.
Take two points $a_1$ and $a_2$ from any $W \in \mathcal{W}$. By
construction, there must exist two sets $M_{i_1}$ and $M_{i_2}$
which intersect with $U_k.$ The distance between the points
$r(y_{i_1})$ and $r(y_{i_2})$ is less than $\varepsilon'$ since
the diameter of $U_k$ is less than $\delta.$ Diameters of balls
$B(x_{i_1}, \varepsilon')$ and $B(x_{i_2}, \varepsilon')$ are less
than or equal to $2\varepsilon'.$  The diameters of the elements
of the covering $\mathcal{U}$ are also less than $\varepsilon'$
since $\delta < \varepsilon'.$ 

By the Triangle Inequality it
follows that $\rho (a_1, a_2) < 7\varepsilon'$ therefore
diam$(\mathcal{W})\leq 7\varepsilon'$. We now have the injective
mapping $\mathcal{J}': \mathcal{N}(\mathcal{W}) \to
\mathcal{N}(\mathcal{U})$ which maps vertices of
$\mathcal{N}(\mathcal{W})$ bijectively onto the vertices of
$\mathcal{N}(\mathcal{U}).$ Suppose that $\mathcal{J}'$ is not
surjective. Then there exists a system of elements $\{W_{i_1},
W_{i_2},\dots W_{i_{m(i)}}\}$ of the covering $\mathcal{W}$ with
empty intersection. 

Let us apply the operation of the extension of
the covering $\mathcal{W}$ (see Definition \ref{dfn:extension}).
We get a new covering. Since all coverings are finite, after few
applications of this operation we finally get the covering
$\mathcal{W'}$ of the space $X$ and a bijective mapping
$\mathcal{J}'': \mathcal{N}(\mathcal{W'}) \to
\mathcal{N}(\mathcal{U}).$

Let us estimate the distance between the points of the sets
$W_{i_k}$ and $W_{i_l}$ which are the vertices of same simplex of
the polyhedron $\mathcal{N}(\mathcal{W'}).$ For the mapping
$\mathcal{T}'$ to $W_{i_k}$ and $W_{i_l}$ there correspond two
elements $U_{i_k}$ and $U_{i_l}$ which intersect. We have the
points $y_{i_k}\in U_{i_k}$ and $y_{i_l}\in U_{i_l}.$ Since
$U_{i_k}\cap U_{i_l} \neq \emptyset,$ we have $\rho(y_{i_k},
y_{i_l})< 2\delta$ and $\rho(r(y_{i_k}), r(y_{i_l}))<
2\varepsilon'.$ By construction $\rho(r(y_{i_k}), W_{i_l}) <
7\varepsilon'$ for every $k.$ 

It follows that the distance between
any points of $W_{i_k}$ and $W_{i_l}$ is less than
$16\varepsilon'.$ So the diameters of the elements of the covering
$\mathcal{W'}$ are no more than $16\varepsilon'.$ Since the number
$K$ was arbitrary we can put $K > 16$ and
get that 
${\rm
diam}(\mathcal{W'})< \varepsilon.$ So we have a fine covering
whose nerve is homeomorphic to $D^n.$

\section{Proof of Theorem~\ref{Thm:Main2} }

We shall need the following lemmas.
\begin{lmm}\label{Freudenthal}{\rm (Freudenthal \cite{E, F, S}).}
Every compact metrizable space $X$ is homeomorphic to the inverse
limit of the inverse sequence $\{P_i \stackrel{\
f_i}\longleftarrow P_{i+1}\}_{i \in \mathbb{N}}$ of finite
polyhedra $P_i$ with piecewise linear {\rm (}i.e. quasi-simplicial
\cite[pp.148, 153]{E}{\rm )} surjective projections $f_i.$ If
${\rm dim}\ X \leq n$ then ${\rm dim}\ P_i \leq n.$
\end{lmm}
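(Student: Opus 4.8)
This is Freudenthal's polyhedral expansion theorem, and the plan is the classical route through nerves of fine open covers of small order. First I would use the hypothesis $\dim X\le n$ only in the form of the classical characterization of covering dimension: every finite open cover of $X$ has a finite open refinement of order $\le n+1$. Combining this with compactness, I would build inductively a sequence of finite open covers $\mathcal{U}_1,\mathcal{U}_2,\dots$ of $X$ such that $\operatorname{mesh}\mathcal{U}_i\to 0$, each $\mathcal{U}_i$ has order $\le n+1$, the closure of every member of $\mathcal{U}_{i+1}$ is contained in a member of $\mathcal{U}_i$ and $\mathcal{U}_{i+1}$ is a star-refinement of $\mathcal{U}_i$, and, as a bookkeeping device, every member of $\mathcal{U}_i$ contains a point lying in no other member of $\mathcal{U}_i$ (a cover canonical on $X$, attainable by shrinking members slightly, cf.\ Lemma~\ref{canonization}). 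Setting $P_i=\mathcal{N}(\mathcal{U}_i)$, each $P_i$ is a finite simplicial complex with $\dim P_i\le n$ because $\mathcal{U}_i$ has order $\le n+1$; thus the dimension clause of the lemma is automatic. This is precisely why one must work with nerves: a tower of polyhedral neighborhoods of $X$ in a Euclidean space would realize $X$ as an inverse limit as well, but of polyhedra of dimension $2n+1$, not $\le n$.

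Next I would construct the bonding maps. A choice of refinement function $r_i\colon\mathcal{U}_{i+1}\to\mathcal{U}_i$ with $\operatorname{cl}V\subset r_i(V)$ for every $V\in\mathcal{U}_{i+1}$ induces a simplicial map $f_i\colon P_{i+1}\to P_i$ on the nerves, which after suitable subdivisions is simplicial, hence piecewise linear. Surjectivity of $f_i$ I would extract from the canonicity built into the covers: for each $U\in\mathcal{U}_i$ pick $p_U\in U$ lying in no other member of $\mathcal{U}_i$ and, provided $\operatorname{mesh}\mathcal{U}_{i+1}$ is below the Lebesgue number of $\mathcal{U}_i$, choose $r_i$ so that some member $V\ni p_U$ satisfies $r_i(V)=U$; then every vertex of $P_i$ lies in the image of $f_i$, and a final fold within the homotopy class of $f_i$ covers the remaining simplices, giving $f_i$ onto.

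Finally I would identify $X$ with $\varprojlim\{P_i,f_i\}$. Each $\mathcal{U}_i$ carries a canonical map $\kappa_i\colon X\to|P_i|$, obtained from the barycentric coordinates of a partition of unity subordinate to $\mathcal{U}_i$; the star-refinement condition makes $f_i\circ\kappa_{i+1}$ and $\kappa_i$ contiguous maps into $|P_i|$, hence homotopic (and, with a little extra care in the construction, as close as one likes). Because $\kappa_i$ is an $(\operatorname{mesh}\mathcal{U}_i)$-map and $\operatorname{mesh}\mathcal{U}_i\to 0$, the family $\{\kappa_i\}$ separates points of $X$, so the induced map $\kappa\colon X\to\varprojlim\{P_i,f_i\}$ is a continuous injection of a compactum, i.e.\ an embedding. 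Conversely, a thread $(x_i)\in\varprojlim\{P_i,f_i\}$ has support simplices whose members determine, through the closure-refinement, a decreasing sequence of nonempty closed subsets of $X$ of diameters tending to $0$; their intersection is a single point $\pi((x_i))$, and one checks that $\pi$ is continuous with $\pi\circ\kappa=\operatorname{id}_X$. The step I expect to be the main obstacle is showing that $\kappa$ is onto --- equivalently, $\kappa\circ\pi=\operatorname{id}$, i.e.\ that the careful inductive choice of the covers really does prevent ``phantom'' threads from surviving in the limit; this coherence argument is the technical heart of Freudenthal's theorem, while everything else is a routine assembly of the definitions of covering dimension, of nerves, of canonical maps, and of simplicial approximation.
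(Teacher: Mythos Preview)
The paper does not supply a proof of this lemma at all: it is stated with attribution to Freudenthal and the references \cite{E,F,S} and then used as a black box in the proof of Theorem~\ref{Thm:Main2}. So there is no paper's proof to compare your proposal against; your outline is the standard nerve--of--covers route, which is indeed Freudenthal's original framework.

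That said, one step of your sketch hides the actual difficulty rather than the one you flag. You pass from ``$f_i\circ\kappa_{i+1}$ and $\kappa_i$ are contiguous, hence homotopic (and, with a little extra care, as close as one likes)'' directly to ``the induced map $\kappa\colon X\to\varprojlim\{P_i,f_i\}$''. Contiguity, or even uniform closeness, is not equality: a family $\{\kappa_i\}$ with $f_i\circ\kappa_{i+1}$ merely homotopic to $\kappa_i$ does \emph{not} assemble into a map to the strict inverse limit. Arranging $f_i\circ\kappa_{i+1}=\kappa_i$ on the nose is exactly Freudenthal's technical contribution (the ``quasi--simplicial'' adjustment alluded to in the statement), and it is logically prior to the surjectivity of $\kappa$ that you single out as the main obstacle. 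The classical cure is an inductive modification of the bonding maps and subdivisions so that the canonical maps become strictly compatible; if instead you try to bypass $\kappa$ and work only with your map $\pi$, the same difficulty resurfaces as the injectivity of $\pi$, since two distinct threads in $\varprojlim P_i$ can a priori determine the same nested intersection in $X$. Your device for surjectivity of $f_i$ (``a final fold within the homotopy class'') is also too vague; the clean fix is to replace each $P_i$ by the image of a sufficiently late $P_j$ under the composite bonding map, which leaves the inverse limit unchanged.
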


\begin{lmm}\label{Pseudo-polyhedral}{\rm (see \cite[Theorem 3.3]{GS}).}
Let ${\rm dim} X \leq n$ and suppose that $X$ is homeomorphic to
the inverse limit of the inverse sequence $\{P_i \stackrel{\
f_i}\longleftarrow P_{i+1}\}_{i \in \mathbb{N}}$ of finite
polyhedra $P_i$ with piecewise linear surjective projections $f_i$
and ${\rm dim}\ P_i \leq n.$ Then for every $i,\ P_i$ can be
embedded as subpolyhedron $R_i$ in $\mathbb{R}^{2n+1}$ so that:
\begin{itemize}
    \item For every $i$ there exist a $q_i-$regular neighborhood $N_i$ of $R_i$ in
$\mathbb{R}^{2n+1}, q_i < \frac{1}{i}$ and $\overline{N}_{i+1}
\subset N_{i};$
    \item $X$ is homeomorphic to $\cap_{i=1}^{\infty}N_{i}.$
\end{itemize}
\end{lmm}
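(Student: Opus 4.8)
The plan is to prove Lemma~\ref{Pseudo-polyhedral} by induction on $i$, constructing the embeddings $R_i$ and the regular neighborhoods $N_i$ simultaneously so that the nesting $\overline{N}_{i+1} \subset N_i$ holds at each stage, and then verifying that the intersection recovers $X$. First I would recall the classical embedding result: since each $P_i$ is a finite polyhedron of dimension at most $n$, it can be piecewise-linearly embedded into $\mathbb{R}^{2n+1}$ (this is the Menger--N\"obeling type embedding theorem, which is sharp in precisely this dimension). The subtlety is that the embeddings cannot be chosen independently; they must be compatible with the bonding maps $f_i$ so that the neighborhoods can be nested. So the core of the construction is to embed $P_{i+1}$ inside the already-constructed regular neighborhood $N_i$ of $R_i$ in a way that geometrically mirrors the map $f_i$.

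The inductive step is where the real work lies. Suppose $R_1, \dots, R_i$ and $N_1, \dots, N_i$ have been built, with $R_i \subset N_i \subset \cdots \subset N_1$ and each $N_j$ a $q_j$-regular neighborhood of $R_j$. I would first embed $P_{i+1}$ as a subpolyhedron $R_{i+1}$ of $\mathbb{R}^{2n+1}$ lying inside $\operatorname{int} N_i$, using a general-position argument to push $R_{i+1}$ off any lower-dimensional obstructions; the map $f_i : P_{i+1} \to P_i$ then corresponds, after identifying $P_i \cong R_i$ and $P_{i+1} \cong R_{i+1}$, to a piecewise-linear map $R_{i+1} \to R_i$ which I arrange to be the restriction of the regular-neighborhood retraction $N_i \to R_i$. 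Then I invoke Lemma~\ref{RegularNeighborhood} to produce a $q_{i+1}$-regular neighborhood $N_{i+1}$ of $R_{i+1}$ with $q_{i+1} < \tfrac{1}{i+1}$, and by taking $q_{i+1}$ small enough (and applying the defining $\varepsilon$-push property of regular neighborhoods to move $N_{i+1}$ off the complement of $N_i$) I can guarantee $\overline{N}_{i+1} \subset N_i$. The requirement that the radii $q_i \to 0$ forces the regular neighborhoods to shrink tightly onto the $R_i$.

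For the final clause, that $X \cong \bigcap_{i=1}^{\infty} N_i$, I would argue as follows. The nested intersection $\bigcap_i N_i = \bigcap_i \overline{N}_i$ is a compact set, and because each $N_i$ deformation-retracts onto $R_i \cong P_i$ via the regular-neighborhood homotopy, and these retractions are compatible with $f_i$ up to the controlled $q_i$-sized adjustments, the inverse system of the $N_i$ (with inclusion-induced or retraction-induced bonding) has inverse limit homeomorphic to $\varprojlim \{P_i, f_i\}$, which is $X$ by Lemma~\ref{Freudenthal}. Concretely, since $q_i \to 0$, any point of $\bigcap_i N_i$ determines a coherent thread $(x_i)$ with $x_i \in R_i$ and $f_i(x_{i+1}) = x_i$, and conversely each such thread determines a unique point of the intersection; this sets up the homeomorphism $\bigcap_i N_i \cong \varprojlim\{P_i, f_i\} \cong X$.

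The main obstacle I anticipate is the inductive compatibility between the embedding $R_{i+1} \hookrightarrow \mathbb{R}^{2n+1}$ and the bonding map $f_i$: it is easy to embed each $P_i$ in isolation, but arranging that $f_i$ is realized by the ambient regular-neighborhood retraction $N_i \to R_i$ requires care in general position and in controlling how $R_{i+1}$ sits inside $N_i$. This is exactly the point where the hypothesis $\dim P_i \le n$ together with the dimension $2n+1$ of the target is essential, since it is what makes the relevant general-position and embedding arguments go through; the quantitative shrinking ($q_i < \tfrac1i$ and $\overline{N}_{i+1}\subset N_i$) is then a matter of choosing the regular-neighborhood radii small enough and invoking the $\varepsilon$-push property, and I expect it to be routine once the qualitative construction is in place.
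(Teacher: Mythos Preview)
Your inductive skeleton matches the paper's: embed $P_1$, take a regular neighborhood, then inductively embed $P_{i+1}$ inside $N_i$ compatibly with $f_i$, take a small regular neighborhood $N_{i+1}$, and show the intersection is $X$. The one substantive divergence is in how you propose to make the embedding of $R_{i+1}$ compatible with $f_i$. You say you will arrange that $f_i$ is \emph{literally} the restriction to $R_{i+1}$ of the regular-neighborhood retraction $N_i \to R_i$. This is stronger than what the paper does, and it is not clear how to achieve it in $\mathbb{R}^{2n+1}$: it essentially asks for an embedding of $P_{i+1}$ lying over the retraction fibers in a prescribed way, which is close to embedding the mapping cylinder of $f_i$ (an $(n{+}1)$-complex), and general position alone does not obviously give you that much control in codimension $n$.

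The paper sidesteps this entirely. It embeds $P_{i+1}$ simplicially by placing each vertex $v$ of (a subdivision of) $P_{i+1}$ at a point of $\mathbb{R}^{2n+1}$ chosen in general position and within distance $\delta_i$ of the already-embedded point $f_i(v) \in R_i$; linearity on simplices then gives $\rho(x, f_i(x)) < \delta_i$ for every $x$ in the embedded copy. So $f_i$ is only realized \emph{approximately}, via a uniform distance bound, not by any retraction. This purely metric compatibility is what drives both the nesting $\overline{N}_{i+1} \subset N_i$ (through the $r_i/3$ estimates) and the final identification $\bigcap_i N_i \cong X$ (distinct threads eventually lie in distinct simplices of some $R_{i_0}$, and the choice of $d_i$ forces their limit points to differ). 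Your last paragraph in fact drifts toward this viewpoint (``up to the controlled $q_i$-sized adjustments''), which is the correct one; the piece to drop is the demand that the regular-neighborhood retraction equal $f_i$ on the nose.
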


Let us give a brief sketch of the proof of this lemma, see also
\cite[Exercise 3.4.5]{DV} .
\begin{proof}
Let $K_1$ be the number of vertices of the polyhedron $P_1$ for
some fixed triangulation. Choose the points $\mathcal{P}_1 =
\{p_{1,1},p_{1,2},\dots p_{1,K_1}\}$ in general position in the
space $\mathbb{R}^{2n+1}$ and embed simplicially the polyhedron
$P_1$ in $\mathbb{R}^{2n+1}$ in such a way that to the vertices of
$P_1$ there correspond the points $\mathcal{P}_1$ (see  e.g.
\cite{HW, SeTh}). Denote by $Q_1$ the image of polyhedron $P_1$ in
$\mathbb{R}^{2n+1}$ with a given triangulation.

Since $f_1$ is a quasi-simplicial mapping there exists barycentric
triangulations of $P_1$ and $P_2$ such that $f_1$ becomes
simplicial mapping. Let $L_1$ and $K_2$ be the number of vertices
of the polyhedra $P_1$ and $P_2$ after these triangulations,
respectively. We have points $\{q_{1,1}, q_{1,2},\dots
q_{1,L_1}\}$ of the polyhedron $Q_1$ which correspond to the
vertices of the polyhedra $P_1$ for this triangulation. 

These
points are not in general position but we can move them in such a
way that we get the points $\mathcal{R}_1 = \{r_{1,1},
r_{1,2},\dots r_{1,L_1}\}$ which are in general position and we
get a new subpolyhedron $R_1$ of $\mathbb{R}^n$ piecewise
homeomorphic to $Q_1$ generated by these points with the
simplicial mapping $f_1:P_2 \to R_1$ (here and in the sequel we
shall use the same symbol for mappings if the domain/range are the
same and if the corresponding diagram is commutative). 

Let $N_1$
be $1-$regular neighborhood of the $R_1,$ see Lemma
\ref{RegularNeighborhood}. Let $r_1$ be the distance between $R_1$
and $\mathbb{R}^{2n+1}\setminus N_1.$

Let $d_1$ be any positive number less than the number 1 and the
maximum of the diameters of the simplices of the polyhedron $R_1.$
Choose the points $\mathcal{P}_2 = \{p_{2,1},p_{2,2},\dots
p_{2,K_2}\}$ in $\mathbb{R}^{2n+1}$ which satisfy the following
conditions:
\begin{enumerate}
    \item All points $\mathcal{R}_1\cup \mathcal{P}_2$ are in
    general position, see. e.g. \cite[p. 102, Theorem 1.10.2]{E};
    \item If the vertex corresponding to the point $p_{2,i}$ is mapped by $f_1$ to the point
    $r_{1,j}$ then $p_{2,i}\in B(r_{1,j},\ \min\{\frac{r_1}{3},
    \frac{d_1}{3}\})$.
\end{enumerate}

Since the points $\mathcal{P}_2$ are in general position we can
simplicially embed  the polyhedron $P_2$ with respect to these
vertices. Call by $Q_2$ the image of $P_2$ in $\mathbb{R}^{2n+1}.$
Let us estimate the distance between the points $x\in Q_2$ and
$f_1(x)\in R_1.$ Take any point $x\in P_2.$ Then we have for some
$\lambda_i,\ \sum \lambda_i = 1,\ \lambda_i \geq 0$ and for some
$p_{2,i}$ that $x = \sum \lambda_ip_{2,i}$ where $p_{2,i}$ are
vertices of some simplex of the polyhedron $P_2$ which contains
$x.$ Then $f(x) = \sum \lambda_if(p_{2,i}).$ 

Further, $$\rho(x,
f(x)) = ||x-f(x)|| = ||\sum \lambda_i(p_{2,i}-f(p_{2,i}))|| <$$  
$$ < \sum
\lambda_i\cdot \min\{\frac{r_1}{3}, \frac{d_1}{3}\} =
\min\{\frac{r_1}{3}, \frac{d_1}{3}\} = \delta_1.$$ 
It follows that
$N(Q_2, \delta_1) \subset N_1.$

So we have a triad $\{R_1, N_1, Q_2\}$ of subpolyhedra of
$\mathbb{R}^{2n+1}$ such that $R_1,$ is piecewise homeomorphic to
$P_1,$ polyhedron $N_1$ is $1-$regular neighborhood of $R_1,$
$Q_2$ is homeomorphic to $P_2.$ There is a natural mapping $f_1:
Q_2\to R_1 $ which is associated with $f_1:P_2 \to P_1$ and for
any $x\in Q_{2}$ we have 
$$\rho(x, f_1(x))\leq \delta_1 =
\min\{\frac{r_1}{3}, \frac{d_1}{3}\}$$
where $r_1$ is the distance
between $R_1$ and $\mathbb{R}^{2n+1}\setminus N_1$ and $d_1$ is
maximum of the diameters of the simplices of the polyhedron $R_1.$

Let us suppose that we are given for some index $i$ the triad
$\{R_i, N_i, Q_{i+1}\}$ of subpolyhedra of $\mathbb{R}^{2n+1}$
such that:
\begin{itemize}
    \item $R_i$ is piecewise homeomorphic to $P_i;$
    \item $N_i$ is $q_i-$regular neighborhood of $R_i,$ \ $q_i < {\rm
min}\{\frac{1}{i}, d_i\}$ and $d_i$ is maximum of the diameters of
the simplices of the polyhedron $R_i;$
    \item $Q_{i+1}$ is homeomorphic to $P_{i+1}$ and there is a natural mapping $f_{i}: Q_{i+1}\to R_i$ which is
associated with $f_{i}:P_{i+1} \to P_i;$
    \item for any $x\in Q_{i+1}$ we have $$\rho(x, f_i(x))\leq \delta_i = \min\{q_i,
\frac{r_i}{3}, \frac{d_i}{3}\}$$
where $r_i$ is the distance
between $R_i$ and $\mathbb{R}^{2n+1}\setminus N_i.$
\end{itemize}

We call the triad with this properties a {\it special triad}.

Now we construct the special triad $\{R_{i+1}, N_{i+1}, Q_{i+2}\}$
in the following way. We have a piecewise linear mapping
$f_{i+1}:P_{i+2}\to P_{i+1} = Q_{i+1}$ therefore there exist
barycentric subdivisions of $P_{i+2}$ and $Q_{i+1}$ such that
$f_{i+1}$ becomes a simplicial mapping. Let $L_{i+1}$ and
$K_{i+2}$ be the number of vertices of the polyhedra $P_{i+1}$ and
$P_{i+2}$ after these triangulations respectively. We have points
$\{q_{i+1,1}, q_{i+1,2},\dots q_{i+1,L_{i+1}}\}$ of the polyhedron
$Q_{i+1}$ which correspond to the vertices of the polyhedra
$P_{i+1}$ for this triangulation. 

We move these points in such a
way that we get the points 
$$\mathcal{R}_{i+1} = \{r_{i+1,1},
r_{i+1,2},\dots r_{i+1,L_{i+1}}\}$$ 
which are in general position
and $\rho(q_{i+1,i}, r_{i+1,i}) < \frac{r_i}{3}.$ We get a new
polyhedron $R_{i+1}$ which lies in the neighborhood $N(Q_{i+1},
\frac{r_i}{3})$ and for which we have a simplicial mapping
$f_{i+1}:P_{i+2} \to R_{i+1}.$ Let $d_{i+1}$ be maximum of the
diameters of the simplices of the polyhedron $R_{i+1}.$ Let
$q_{i+1}< {\rm min} \{\frac{1}{i+1}, d_{i+1}\}$ be a such number
that $q_{i+1}-$regular neighborhood $N_{i+1}$ of $R_{i+1}$ be
subset of $N(Q_{i+1}, \frac{r_i}{3}).$ 

Let $r_{i+1}$ be the
distance between $R_{i+1}$ and $\mathbb{R}^{2n+1}\setminus
N_{i+1}$ and let $d_{i+1}$ be maximum of the diameters of the
simplices of the polyhedron $R_{i+1}.$ Choose the points
$\mathcal{P}_{i+2} = \{p_{i+2,1},p_{i+2,2},\dots p_{i+2,K_i+2}\}$
in $\mathbb{R}^{2n+1}$ satisfying the following conditions:
\begin{enumerate}
    \item All points $\cup_{i=1}^{i+1}\mathcal{R}_i\cup \mathcal{P}_{i+2}$ are in
    general position;
    \item If the vertex corresponding to the point $p_{i+2,i}$ is mapped by $f_{i+1}$ to the point
    $r_{i+1,j}$ then $p_{i+2,i}\in B(r_{i+1,j},\ \min\{q_{i+1}, \frac{r_{i+1}}{3},
    \frac{d_{i+1}}{3}\})$.
\end{enumerate}

Since the points $\mathcal{P}_{i+2}$ are in general position we
can simplicially embed the polyhedron $P_{i+2}$ with respect to
these vertices. Call by $Q_{i+2}$ the image of $P_{i+2}$ in
$\mathbb{R}^{2n+1}.$ It is easy to see that $N(Q_{i+2},
\frac{r_{i+1}}{3}) \subset N_{i+1}.$ And we have a special triad
$\{R_{i+1}, N_{i+1}, Q_{i+2}\}.$ By induction we now have the
special triad $\{R_k, N_k, Q_{k+1}\}$ for every $k\in \mathbb{N}.$

If we consider two different sequences of points $x_i \in P_i,
f_i(x_{i+1}) = x_i$ and $x_i' \in P_i, f_i(x_{i+1}') = x_i',$ then
obviously there exists an index $i_0$ such that $x_i$ and $x_i'$
belong
to different simplices
of $R_{i_0}.$ It follows by our
choice
of numbers $d_i$ that the limit points $x$ and $x'$ of
the sequences $\{x_i\}$ and $\{x_i'\}$ are different. 

Therefore
the space $X$ is homeomorphic to the intersection
$\bigcap_1^{\infty}\overline{N_i}$ and is the limit of the
sequence of polyhedra $\{R_i\}_{i \in \mathbb{N}}.$
\end{proof}

Now we can prove 
Theorem \ref{Thm:Main2}. First let us prove Theorem
\ref{Thm:Main2} in the case $n = 1,$ i.e. let us prove that every
$1-$dimensional cell-like compactum $X$ can be embedded as a
cellular subspace in $\mathbb{R}^3.$ 

According to the
Case-Chamberlin theorem, every $1-$dimensional cell-like continuum
is tree-like, i.e. any open covering has a tree-like refinement (a
refinement whose
nerve is
a $1-$dimensional finite
contractible complex)
\cite{CC}. By the proof of the Freudenthal
Theorem \cite{S} it follows that $X =
\underleftarrow{\lim}(P_i\stackrel{f_i}\longleftarrow P_{i+1}),$
where each $P_i$ is a contractible $1-$dimensional polyhedron and
all projections $f_i$ are piecewise linear mappings. 

It follows
by Lemma
\ref{Pseudo-polyhedral}  that $X$ can be embedded in
$\mathbb{R}^3$ so that its image has arbitrary fine neighborhoods
$N_i$ with contractible spines $R_i \approx P_i,$ i.e. $N_i$ is
homeomorphic to $D^3$ and the embedding of $X$ in $\mathbb{R}^3$
is cellular
in this case.

Let now $n \geq 2.$ Then we embed space $X$ in the regular way
in $\mathbb{R}^{2n+1}$ according to  Lemma
\ref{Pseudo-polyhedral}. Let us show that such
an
embedded space $X$
satisfies the cellularity criterion in $\mathbb{R}^{2n+1},$ (see
\cite{DV}). Consider any neighborhood $U$ of 
$X$ in
$\mathbb{R}^{2n+1}.$ 

Since the space $X$ is cell-like there exists
a neighborhood $V$ of $X$ in $U$ such that the embedding $V
\subset U$ is homotopic to the constant mapping. Consider any
mapping $f$ of $\partial D^2$ to $V\setminus X.$ Since the
embedding $V \hookrightarrow U$ is homotopic to the constant
mapping there exists an extension  $\overline{f}: D^2 \to U.$ By
the Simpicial Approximation Theorem we can suppose that $f$ and
$\overline{f}$ are simplicial mappings and the image of
$\overline{f}$ is a $2-$dimensional polyhedron in $U.$ 

Let
$\varepsilon$ be a
positive number such that $N(X,
\varepsilon)\subset V$ and let us choose 
index $i$ so that the
$q_i-$regular neighborhood $N_{i}$ of $R_{i},$  is a subset of $V$
(it suffices to require $\ q_i < \varepsilon.$) We may assume
that
${\rm Im} \overline{f}$ and $R_{i}$ are in general position and
spaces ${\rm Im}\overline{f}$ and $P_i$ do not intersect, ${\rm
Im}\overline{f} \cap P_i = \emptyset$, since $2+n < 2n+1.$ 

By Lemmas \ref{RegularNeighborhood} or
\ref{Pseudo-polyhedral} there exist
for
$N_{i}$, a $q_i-$push $h_{i}$ of the pair
$(\mathbb{R}^{2n+1}, R_{i})$ such that $h_{i}\overline{f}(D^2)\cap
N_{i} = \emptyset.$ It follows that $h_{i}\overline{f}(D^2)\cap X
= \emptyset.$ Therefore $f:\partial D^2\to U\setminus X$ is
inessential and we obtain
an embedding of the
cell-like space $X$ into
$\mathbb{R}^{2n+1}$, for $2n+1\geq 5$, which satisfies the
cellularity criterion of McMillan (cf. \cite{McM} or \cite[Theorem
3.2.3]{DV}). It follows that $X$ is cellular.


\section{Acyclic subspaces of the plane whose
fine coverings are all nonacyclic}

We shall present two examples of locally compact planar acyclic 
with respect to {\v C}ech homology spaces whose fine coverings are all nonacyclic.
\medskip

{\bf Example 5.1.} Consider in the plane $\mathbb{R}^2$ a
countable bouquet of circles $S^1_i$,
with a base point $A$
and with a common tangent line,
whose diameters
 tend to infinity.
From every
circle $S^1_i$ remove a small open arc 
$AA_i$ such
that the diameters of these arcs tend to 0. We get the desired
space $X_1$, see Figure 1.

\begin{center} 
\includegraphics*[trim=0cm 5cm 0cm -1cm, clip=true, width=8cm]{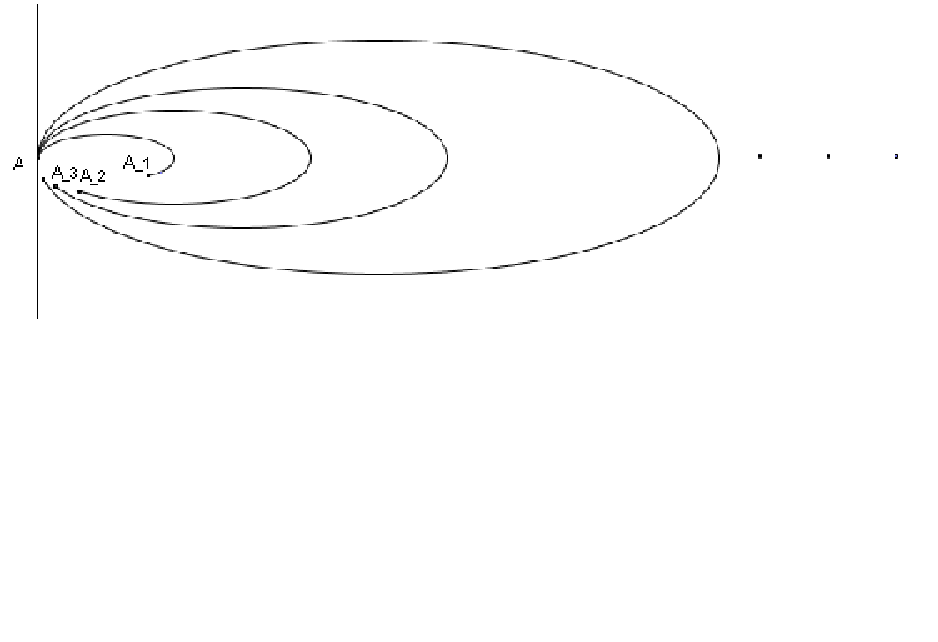}

\begin{center}\vspace{-3mm}\normalsize{Fig. 1. Locally compact acyclic
space whose fine coverings are all nonacyclic.}\end{center}
\end{center}

Obviously $X_1$ is locally compact space. Consider the following
cofinite  system of coverings of $X_1.$ Triangulate the segments
$S_{i}^1 \setminus  AA_{i}$ and take their coverings by
open stars of all vertices of the triangulations except the stars
of the vertex $A.$ For the point $A$ consider the open set $B(A,
\varepsilon)\cap X_1.$ Obviously, the coverings of such type are
cofinal in the set of all coverings of $X_1.$ 

The nerves of this
coverings are homeomorphic to the countable bouquet
$$(\vee_1^nI_i)\bigvee_A (\vee_{n+1}^{\infty}S_i^1)$$ of circles and
a finite number of segments with respect to the point $A.$
Therefore their $1-$dimensional homology groups are isomorphic to
the direct sums $\sum_{i=n+1}^{\infty}\mathbb{Z}$ and we have the
following inverse system:
$$\sum_{1}^{\infty}\mathbb{Z}\hookleftarrow
\sum_{2}^{\infty}\mathbb{Z}\hookleftarrow
\sum_{3}^{\infty}\mathbb{Z}\hookleftarrow \cdots .$$ The inverse
limit of this system is zero and the space $X_1$ is acyclic.
However, since all homomorphisms in this system are nonzero
monomorphisms it follows that all fine coverings are nonacyclic.

\medskip
{\bf Example 5.2.} Consider the "compressed sinusoid" $CS$ as a
subspace of the rectangle $[0, 1]\times [-1, 1] \subset
\mathbb{R}^2:$
$$CS = \{(x, y)|\ y = \sin {\frac{1}{x}}\ \mbox{ if}\ x\in (0, 1],\ \mbox
{and}\ y\in [-1,1]\ \mbox{ if}\ x = 0 \}.$$

Let us remove the continuum $\{0\}\times [0, 1]$ from it. We get a
locally compact space $X_2 = CS\setminus (\{0\}\times [0, 1]).$

\begin{center}
\includegraphics*[trim=0cm 1cm 5cm 0cm, clip=true, width=8cm ]{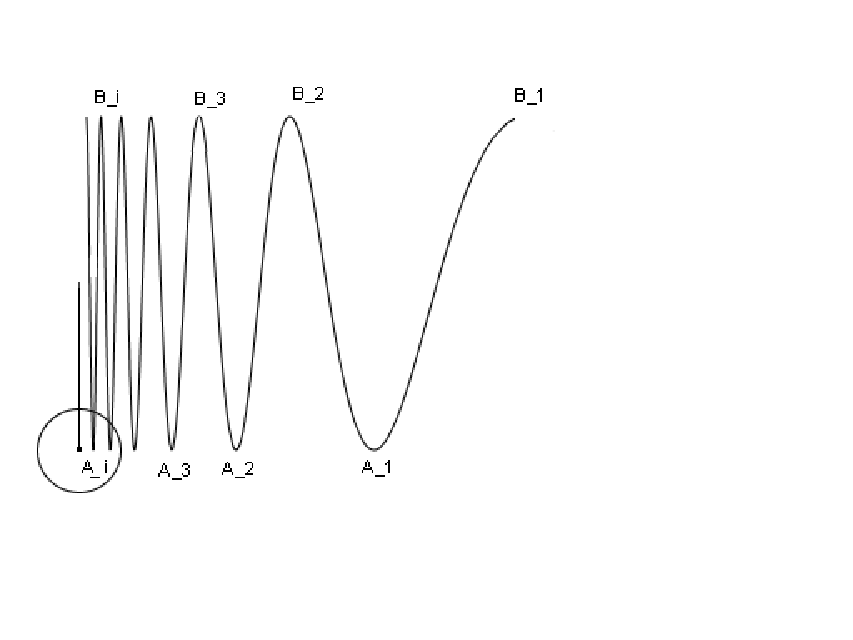}
\begin{center}\vspace{-3mm}\normalsize{Fig.2. Locally compact acyclic subspace of  $CS$ without acyclic fine  coverings.}\end{center}
\end{center}

\medskip

To prove the acyclicity consider the following strong deformation
retract $T$ of $CS\setminus (\{0\}\times [0, 1])$:
$$T = \{(x, y)|\ y = \sin {\frac{1}{x}}\ \mbox{ if}\ x\in (0, 1],\ \mbox
{and}\ y = -1\ \mbox{ if}\ x = 0 \}.$$

The space $T$ has the same homotopy type as $X_2$ and it consists
of the line $y = \sin {\frac{1}{x}}$\ which is homeomorphic to
$(0, 1]$ and point $(0; -1).$ There exists a cofinite system of
open coverings of this space $T$: On this line consider the
standard triangulation and its cover by open stars of its
vertices. For the point $(0; -1)$ consider the open subspace
$B((0; -1), \varepsilon)\cap T$ of $T.$ The nerves of this
cofinite system of coverings are homeomorphic to a countable
bouquet of circles and a segment.

We have (as in the first example) the cofinite inverse system of
homology groups and homomorphisms:
$$\sum_{1}^{\infty}\mathbb{Z}\hookleftarrow
\sum_{2}^{\infty}\mathbb{Z}\hookleftarrow
\sum_{3}^{\infty}\mathbb{Z}\hookleftarrow \cdots .$$

The inverse limit of this system is trivial, therefore
$\check{H}_1(T) = 0$ and hence space $X$ is acyclic with respect
to {\v C}ech homology groups. However, all fine covering of $X_2$
are nonacyclic.

\section{Epilogue}

It follows by Corollory \ref{contractible}  that every
$n-$dimensional contractible compactum has arbitrary fine
coverings of order $2n+1$ whose nerves  are all contractible.
The following question 
is a special case of Problem
\ref{AR}:

\begin{qws}
Does there exist an $n-$dimensional contractible compactum whose
fine coverings of order $n+1$ are all nonacyclic?
\end{qws}


\section{Acknowledgements}
This research was supported by the Slovenian Research Agency
grants J1-5435 and P1-0292. We are grateful to Robert Daverman and
Gerard Venema for their comments. We also thank the referee for remarks and suggestions.

\providecommand{\bysame}{\leavevmode\hbox to3em{\hrulefill}\thinspace}

\end{document}